\newtheorem{theorem}{Theorem}[section]
\newtheorem{lemma}[theorem]{Lemma}
\newtheorem{proposition}[theorem]{Proposition}
\newtheorem{corollary}[theorem]{Corollary}
\theoremstyle{definition}
\newtheorem{example}[theorem]{Example}
\newtheorem *{Theorem A}{Theorem A}
\newtheorem *{Fraenkel's Conjecture}{Fraenkel's Conjecture}
\newtheorem *{The Three Gap Theorem}{The Three Gap Theorem}
\begin{document}
\title{A different approach to the Fraenkel Conjecture for low $n$ values}

\author{Ofir Schnabel}
\address{Department of Mathematics, University of Haifa, Haifa 3498838, Israel}
\email{os2519@yahoo.com}

\author{Jamie Simpson}
\address{Department of Mathematics and Statistics, Curtin University, Perth 6102, Western Australia}
\email{simpson@maths.curtin.edu.au}

\begin{abstract}
We present a new approach to deal with Fraenkel's conjecture, which
describes how the integers can be partitioned into sets of
rational Beatty sequences, in the case where the numerators of the
moduli are equal. We use this approach to give a new proof of the known $n=4$
case when the numerators are equal.
\end{abstract}
\date{\today\vspace{-0.5cm}}
\maketitle
\bibliographystyle{abbrv}

\section{Introduction}\label{intro}\pagenumbering{arabic} \setcounter{page}{1}
A set of arithmetic progressions which partitions the integers is
called a \textit{disjoint covering system} (DCS). A classic result
from the 1950's (see \cite{EG}) shows that in any DCS there must
be two arithmetic progressions with the same common modulus, that
is, any DCS admits \textit{multiplicity}. Since then there has
been considerable study of the ways the integers can be
partitioned into arithmetic progressions and there have  been
generalizations of this concept. One of these generalizations is a
disjoint covering system of Beatty sequences. A \textit{Beatty
sequence}  is a sequence $S(\alpha ,\beta)= \{\lfloor \alpha
n+\beta \rfloor :n\in \mathbb{Z}\}$, where $\alpha ,\beta \in
\mathbb{R}$ ($\lfloor \alpha n+\beta \rfloor$ is the  integer part
of $\alpha n+\beta$). Here, $\alpha$ is called the
\textit{modulus} of the sequence $S(\alpha ,\beta)$. Similarly to
partitioning the integers into arithmetic progressions, a
\textit{disjoint covering system of Beatty sequences}  is a set of
Beatty sequences $\{S(\alpha_i ,\beta_i)\}_{i=1}^k$ such that
every integer belongs to  exactly one Beatty sequence. Clearly, if
$\alpha_i\in \mathbb{Z}$ for all $i$ then the system
 $\{S(\alpha_i ,\beta_i)\}_{i=1}^k$ is a DCS.
By density arguments, for a system $\{ S(\alpha _i,\beta
_i)\}_{i=1}^n$,
\begin{equation}\label{eq:weight}
\sum_{i=1}^n \frac{1}{\alpha _i}=1.
\end{equation}
The concept of a covering system of Beatty sequences is attributed to Samuel Beatty \cite{beatty}
who proved in 1926 that if $x,y$ are irrational positive numbers
such that
\begin{equation}\label{eq:Beatty}
\frac{1}{x}+\frac{1}{y}=1 ,
\end{equation}
then the sequences $\{ix\}_{i=1}^\infty , \{iy\}_{i=1}^\infty $
contain one and only one number between each pair of consecutive
natural numbers. By~\eqref{eq:weight} any irrational numbers $x,y$
satisfy $\frac{1}{x}+\frac{1}{y}=1$ if and only if the
$S(x,0),S(y,0)$ induce a partition on the natural numbers. In
fact, with appropriate $\beta _1,\beta _2$  $S(x,\beta
_1),S(y,\beta _2)$ partition the whole set of integers. Apparently
John William Strutt (Lord Rayleigh) in his book, ``The theory of
sound" from 1877 \cite{strutt} was the first to refer, indirectly,
to such systems.

As with DCS, we say that a disjoint covering  system of Beatty
sequences $\{S(\alpha_i ,\beta_i)\}_{i=1}^k$ admits
\textit{multiplicity} if there exist $1\leq i<j\leq k$ such that
$\alpha_i=\alpha_j$. It is natural to ask if, as is the case with
DCS, any such system of Beatty sequences has multiplicity. It
follows from Beatty's result that the answer is negative if the
integers are partitioned by only two Beatty sequences. However, if
$k>2$, Graham \cite{graham73} showed that any system
$\{S(\alpha_i ,\beta_i)\}_{i=1}^k$ with irrational moduli admits
multiplicity.

We are left with the case where all the moduli are rational, in which case we call the sequences \emph{rational Beatty sequences}. We will call a set of rational Beatty sequences which partition the integers a \emph{disjoint covering system of rational Beatty sequences}, or a DCS of RBS or, when there is no danger of confusion, just a DCS.
It is easy to see that there are ways of partitioning the integers with rational Beatty
systems without multiplicity. For example, a DCS with 3
sequences is: $S(7/4,0), S(7/2,-1),
S(7,-3)$. Fraenkel \cite{fraenkel73} showed that for
every positive integer the system
\begin{equation}\label{eq:rational}
\{S(2^n-1/2^{n-i}, -2^{i-1}+1)\}_{i=1}^n
\end{equation}
is a DCS with distinct moduli. Fraenkel conjectured
that the systems~\eqref{eq:rational} are essentially the only ones
(up to translation) without multiplicity. More precisely,
\begin{Fraenkel's Conjecture}\label{con:fraenkel} (Fraenkel, see \cite{simpson2004})\\
If  $\{ S(\alpha _i,\beta _i)\}_{i=1}^n$ form a DCS, with
$n>2$ and $\alpha _1<\alpha _2<\cdots <\alpha _n$ then
$\alpha _i=p/q_i$, where $p=2^n-1, q_i=2^{n-i}$, for $i=1,2,\ldots, n$.\\
\end{Fraenkel's Conjecture}
Fraenkel's conjecture has been proved for $n\leq 7$. For $n=3$ by
Morikawa \cite{morikawa}, for $n=4$ by  Altman, Gaujal and
Hordijk \cite{altman}, for $n=5$, and $6$  by Tijdeman \cite{tijdeman}
and for $n=7$ by Bar\'{a}t and Varj\'{u} \cite{barat}.

In this note we present a new approach to deal with Fraenkel's
conjecture which will hopefully be easier to generalize to larger
values of $n$ under the assumption that all the numerators of the
moduli are equal. This assumption will enable us to associate with
each $q_i$ a set $B_i$ containing $q_i$ $p$-th roots of unity.
Moreover, for $i\neq j$ the sets $B_i$ and $B_j$ are disjoint and
therefore the sets $B_i,$ $1\leq i\leq n$ partition the group of
all $p$-th roots of unity. This will allow us to carry out some
manipulations which solve the problem for $n=3,4$, and which might
be adapted to prove, under the equal numerators assumption, the
conjecture for larger values of $n$ (see e.g. Lemma~\ref{Ofir}).
We record the $n=4$ case as
\begin{Theorem A}\label{th:BS2}(see \cite{altman})\\
Let $\{ S(p/q_i,\beta _i)\}_{i=1}^4$ be a disjoint covering system of rational Beatty sequences. If the
moduli are distinct then $p=15$ and $\{q_1,q_2,q_3,q_4\}=\{8,4,2,1\}$.
\end{Theorem A}
We mention that a proof for the $n=3$ case can be easily deduced
from the proof of the $n=4$ case.

\section{From Beatty sequences to a partition of $\{0,1,\ldots ,p-1\}$}

We start by introducing a relation between disjoint covering
systems of Beatty sequences $\{ S(p/q_i,\beta _i)\}_{i=1}^n$ and
partitions of the set $\{0,1,\ldots ,p-1\}$ which will be important
in the proof of Theorem A.

It is shown in \cite{simpson2004} that for disjoint covering
system of Beatty sequences $\{ S(\frac{p_i}{q_i},\beta
_i)\}_{i=1}^n$, there exists a polynomial $f(z)$ whose
coefficients all equal to 1 such that
\begin{equation}\label{eq:generating}
f(z)+\sum_{i=1}^n \frac{z^{b_i}}{1-z^{p_i}} \cdot
\frac{1-z^{\overline{q}_iq_i}}{1-z^{\overline{q}_i}}
=\frac{1}{1-z},
\end{equation}
 where $\overline{q}_i$ is the smallest non-negative integer satisfying
 \begin{equation}\label{eq:simpsonqi}
{\overline{q}_iq_i} \equiv -1\pmod{p_i}.
\end{equation}
However, in our situation, $p_i=p_j=p$ for every $1\leq i,j\leq
n$. Therefore, if $\xi$ is a $p$-th primitive root of unity, by
taking the limit as $z$ goes to $ \xi$
in~\eqref{eq:generating} we get
\begin{equation}\label{eq:vanishing3}
\sum_{i=1}^n \frac{\xi ^{b_i}}{1-\xi ^{\overline{q}_i}}\cdot
(1-\xi^ {q_i\overline{q}_i})=0.
\end{equation}
Consequently, by~\eqref{eq:simpsonqi}
\begin{equation}\label{eq:vanishing}
\sum_{i=1}^n \frac{\xi ^{b_i}}{1-\xi ^{\overline{q}_i}}=0,
\end{equation}
and then
\begin{equation}\label{eq:roots}
\sum_{i=1}^n \xi ^{b_i}(1+\xi ^{\overline{q}_i}+\xi
^{2\overline{q}_i}+\cdots +\xi ^{(q_i-1)\overline{q}_i})=0.
\end{equation}
Equation~\eqref{eq:roots} induces a disjoint cover of the
\textit{$p$-th} roots
 of unity with corresponding sets $M_i$ for $1\leq i\leq n$ where
\begin{equation}
M_i=\{\xi ^{b_i+j\overline{q}_i}\}_{j=0}^{q_i-1}.
\end{equation}
Multiplying the sets by $\xi ^{-b_1}$ we may assume
  $M_1=\{\xi ^{j\overline{q}_1}\}_{j=0}^{q_1-1}$.\\
 Let $\gamma:\xi ^i\longmapsto \xi ^{-q_1\cdot i}$ be an automorphism
 of the group of roots of unity $\{1,\xi,\xi ^2,\ldots \xi ^{p-1}\}$.
 This is indeed an automorphism since $(q_1,p)=1$.
Let $\tilde{q_i}$ be the smallest non-negative integer satisfying
$\tilde{q_i}\equiv -q_1\cdot \overline{q}_i\pmod{p}$, $(1\leq i\leq
n)$.
 Then $\tilde{q_1}=1$ and since $\{\overline{q}_i\}_{i=1}^n$ are
  distinct then so are $\{\tilde{q_i}\}_{i=1}^n$. We get
 a new cover of the \textit{$p$-th} roots
 of unity with $\tilde{q_1}=1$ which in turn induces a partition
 $$\{0,1,\ldots ,p-1\}=\mathop{\dot{\bigcup}}_i B_i,$$
 where
 \begin{equation}\label{eq:BiTG}
B_i=\left\{\tilde{b_i}+j\tilde{q_i}
\pmod{p}\right\}_{j=0}^{q_i-1}.
\end{equation}
Here, $\tilde{b_i}$ is the smallest non-negative integer satisfying
$\tilde{b_i}\equiv -q_1\cdot b_i\pmod{p}$ for $2\leq i\leq n$.  This implies that  $\tilde{b_1}=0$ and $\tilde{q_1}=1$ so that $B_1=\{0,1,\dots,q-1\}.$

From now on we will use the correspondence between disjoint
covering system of Beatty sequences $\{ S(p/q_i,\beta
_i)\}_{i=1}^n$ and the sets $B_i$ without saying explicitly what
that correspondence is.
\section{Beatty sequences and TG-sequences}
In this section we provide some preliminary results concerning
disjoint covering system of Beatty sequences and  introduce
and study TG-sequences. In the next lemma we write $B_1+\tilde{q}_2$ for $\{b+\tilde{q}_2:b \in B_1\}$.
\begin{lemma}\label{Ofir}
Let $S(p/q_1,0)$, $S(p/q_2,b_2)$ be disjoint Beatty sequences and
let
$$B_1=\{0,\dots,q_1-1\} \quad\textit{and }
B_2=\{\tilde{b}_2,\tilde{b}_2+\tilde{q}_2,\dots,\tilde{b}_2+(q_2-1)\tilde{q}_2\}$$
be the corresponding sets defined in (\ref{eq:BiTG}). If
\begin{equation} \label{e0.2}
B_1\cap (B_1+\tilde{q}_2)\not = \emptyset
\end{equation}
 then $B_2$ is a arithmetic progression in $[q_1,p-1]$
with common modulus $\tilde{q}_2$ or $p-\tilde{q}_2$.
\end{lemma}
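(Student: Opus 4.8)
The plan is to work throughout in the cyclic group $\mathbb{Z}/p\mathbb{Z}$, identifying each $B_i$ with its set of representatives in $\{0,1,\dots,p-1\}$. The starting observation is that, since $S(p/q_1,0)$ and $S(p/q_2,b_2)$ are disjoint, the associated sets satisfy $B_1\cap B_2=\emptyset$. Because $B_1=\{0,1,\dots,q_1-1\}$, this disjointness already forces every element of $B_2$ to lie in $[q_1,p-1]$, which disposes of the ``in $[q_1,p-1]$'' part of the conclusion without using hypothesis~\eqref{e0.2}. It therefore remains only to control the common difference, and this is where~\eqref{e0.2} enters.

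Next I would unwind the hypothesis. Reading $B_1\cap(B_1+\tilde q_2)\neq\emptyset$ in $\mathbb{Z}/p\mathbb{Z}$, there are $x,y\in\{0,\dots,q_1-1\}$ with $\tilde q_2\equiv x-y\pmod p$; since $x-y$ ranges over $\{-(q_1-1),\dots,q_1-1\}$ and $\tilde q_2\not\equiv 0$, this says precisely that $d:=\min(\tilde q_2,\,p-\tilde q_2)$ satisfies $1\le d\le q_1-1$. The two cases $d=\tilde q_2$ and $d=p-\tilde q_2$ are exactly the two alternatives appearing in the statement. In either case I would rewrite $B_2$ as an arithmetic progression modulo $p$ with common difference $d$: the set $B_2=\{\tilde b_2+j\tilde q_2\}_{j}$ is an AP with difference $\tilde q_2$, and reversing its listing turns it into an AP with difference $-\tilde q_2=p-\tilde q_2$, so one of the two orderings realizes the small difference $d<q_1$. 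Write this listing as $a_0,a_1,\dots,a_{q_2-1}$ with $a_{j+1}\equiv a_j+d\pmod p$ and each $a_j\in[q_1,p-1]$.

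The crux---and the step I expect to be the main obstacle---is to show that this progression never wraps around modulo $p$, so that it is a genuine integer arithmetic progression. Suppose a wrap occurred at some index, i.e.\ $a_j+d\ge p$, so that $a_{j+1}=a_j+d-p$. Using $a_j\le p-1$ and $d\le q_1-1$ gives $a_{j+1}\le q_1-2<q_1$, forcing $a_{j+1}\in\{0,\dots,q_1-1\}=B_1$; this contradicts $a_{j+1}\in B_2$ together with $B_1\cap B_2=\emptyset$. Hence $a_j+d<p$ for every $j$, the identities $a_{j+1}=a_j+d$ hold in $\mathbb{Z}$, and $a_0<a_1<\cdots<a_{q_2-1}$ is an honest increasing arithmetic progression lying in $[q_1,p-1]$ with common difference $d\in\{\tilde q_2,\,p-\tilde q_2\}$. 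The only delicate points to get right are the sign and interval bookkeeping in the wrap estimate and the explicit reversal that reduces both cases to ``difference $<q_1$''; everything else follows immediately from $B_1\cap B_2=\emptyset$.
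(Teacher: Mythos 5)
Your proof is correct and follows essentially the same route as the paper's: both extract from hypothesis~\eqref{e0.2} that the short circular step $\min(\tilde q_2,\,p-\tilde q_2)$ is less than $q_1$, and then use disjointness from the interval $B_1=\{0,\dots,q_1-1\}$ to rule out any wrap-around of the progression modulo $p$. Your phrasing (a wrapped term would land in $[0,q_1-2]\subseteq B_1$) is just the contrapositive of the paper's ``$B_1$ does not fit in this gap.''
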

\begin{proof}
Condition (\ref{e0.2}) implies that either $\tilde{q}_2<q_1$ or
$\tilde{q}_2>p-q_1$. For $i \in \{0,\dots,q_1-2\}$ the minimum of $\tilde{b}_2+(i+1)\tilde{q}_2-(\tilde{b}_2+i\tilde{q}_2) \mod p$ and $\tilde{b}_2+i\tilde{q}_2-(\tilde{b}_2+(i+1)\tilde{q}_2) \mod p$ is the minimum of $\tilde{q}_2 \mod p$ and $p-\tilde{q}_2 \mod p$, which is less than $q_1-1$.  So $B_1$ does not fit in this gap.  Therefore we have
$$ 1<q_1 <\tilde{b}_2<\tilde{b}_2+\tilde{q}_2< \dots <\tilde{b}_2+(q_2-1)\tilde{q}_2<p$$
or
$$1<q_1 <\tilde{b}_2+(q_2-1)\tilde{q}_2< \dots <\tilde{b}_2+\tilde{q}_2<\tilde{b}_2<p.$$
The result follows.
\end{proof}
The following lemma will be useful in the proof of Theorem A.
\begin{lemma} \label{L1} If $S(p_1/q_1,b_1) \cup S(p_2/q_2,b_2)=S(p_3/q_3,b_3)$
then one of the following holds:\\
(a) $\{p_1/q_1,p_2/q_2\}$ is any size 2 subset of
$\{7/1,7/2,7/4\}$,\\
(b) $p_1/q_1=p_2/q_2$,\\
(c) $\{p_1/q_1,p_2/q_2\}=\{p/q,p/(p-2q)\}$ for some $p$ and $q$.
\end{lemma}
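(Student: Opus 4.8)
The plan is to first reduce the hypothesis to a statement about disjoint unions. Since $S(p_3/q_3,b_3)$ is a Beatty sequence it has density $q_3/p_3$, and $S(p_1/q_1,b_1)\cup S(p_2/q_2,b_2)$ has density at most $q_1/p_1+q_2/p_2$, with equality exactly when the two sequences are disjoint. Hence the equality of the union forces $S_1:=S(p_1/q_1,b_1)$ and $S_2:=S(p_2/q_2,b_2)$ to be disjoint and $q_1/p_1+q_2/p_2=q_3/p_3$. I would write each modulus in lowest terms and, after possibly swapping indices, assume $\alpha_1:=p_1/q_1\le\alpha_2:=p_2/q_2$. Conclusion (b) is the degenerate possibility $\alpha_1=\alpha_2$, so the substantive work is the case $\alpha_1<\alpha_2$.

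The central step is a renormalization. Enumerate $S_3$ in increasing order and record, for each element, whether it comes from $S_1$ or from $S_2$; this assigns to $S_1$ and $S_2$ their sets of positions $T_1,T_2\subseteq\mathbb{Z}$ inside $S_3$, and $T_1\cup T_2=\mathbb{Z}$ disjointly. Using the composition property of Beatty sequences (the positions occupied inside one Beatty sequence by a sub-Beatty sequence again form a Beatty sequence), $T_1$ and $T_2$ are rational Beatty sequences of moduli $\alpha_1/\alpha_3$ and $\alpha_2/\alpha_3$ partitioning $\mathbb{Z}$. Matching densities gives $\alpha_3/\alpha_1+\alpha_3/\alpha_2=1$, so writing $\alpha_1/\alpha_3=u/v$ in lowest terms forces $\alpha_2/\alpha_3=u/(u-v)$ with $\gcd(u,v)=1$, and therefore $\alpha_1:\alpha_2:\alpha_3=\tfrac{u}{v}:\tfrac{u}{u-v}:1=u(u-v):uv:v(u-v)$. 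This is a one-parameter family for each admissible pair $(u,v)$; it is a necessary condition, but not yet sufficient, because the renormalization only sees densities.

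To finish I would analyze realizability, that is, which members of these families actually occur. Passing to residues modulo the common period and using that every Beatty sequence is \emph{balanced} (its consecutive gaps take only two values, differing by $1$, as in the Three Gap Theorem), the question becomes: for which parameters does the image of a balanced index set under the uniformly balanced sequence $S_3$ remain balanced? Concretely, the gaps of $S_1$ inside $\mathbb{Z}$ are sums of runs of consecutive gaps of $S_3$, and the Three Gap Theorem controls these sums; demanding that they take only two consecutive values pins the free scaling to a Diophantine relation of the shape $2q_1+q_2=p$ (up to the symmetry $q_1\leftrightarrow q_2$), which is exactly family (c), namely $\{p/q,\,p/(p-2q)\}$. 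The parameters that survive the balance requirement yet escape this relation are finite in number, and I expect them to collapse precisely to the three pairs drawn from $\{7/1,7/2,7/4\}$, yielding the sporadic family (a).

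The main obstacle is this last realizability analysis. The renormalization delivers only necessary conditions, and the genuine difficulty is the converse direction: the de-renormalized sets are Beatty sequences solely for special scalings, and separating the generic family (c) from the finitely many sporadic exceptions requires a careful Three-Gap (equivalently, continued-fraction) accounting of how the two gap lengths of $S_3$ combine, together with a complete and exhaustive solution of the resulting Diophantine system certifying that the only exceptions are the $p=7$ configurations (a) — which reflect the $p=2^3-1$ instance of Fraenkel's construction. Verifying that admissible phases $b_i$ can actually be chosen in each surviving case is a final, routine check.
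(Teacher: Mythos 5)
Your proposal does not reach a proof: the final and decisive step --- showing that the ``balance'' requirement on the gaps forces either the relation $2q_1+q_2=p$ (case (c)) or one of finitely many sporadic configurations collapsing to $p=7$ (case (a)) --- is only stated as an expectation (``I expect them to collapse precisely to\dots''), not carried out. That step is not a routine verification; it is essentially the entire content of the $n=3$ case of Fraenkel's conjecture, which is a genuinely hard classification (Morikawa's theorem). In effect your renormalization reduces the lemma to re-proving that classification from scratch, and the reduction itself leans on an unproved ``composition property'' (that the index set of a sub-Beatty sequence inside a Beatty sequence is again a Beatty sequence), which would need its own justification for rational moduli with arbitrary phases.

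The paper's proof avoids all of this with one observation you are missing: the complement of a rational Beatty sequence is again a rational Beatty sequence, namely the complement of $S(p_3/q_3,b_3)$ is $S(p_3/(p_3-q_3),b_3-\overline{q}_3)$ (Lemma 2 of \cite{simpson91}). The hypothesis then says that $S(p_1/q_1,b_1)$, $S(p_2/q_2,b_2)$ and $S(p_3/(p_3-q_3),b_3-\overline{q}_3)$ form a three-term disjoint covering system, and the lemma follows immediately by quoting the known $n=3$ case of Fraenkel's conjecture: distinct moduli give the $\{7/1,7/2,7/4\}$ configuration (a), while a coincidence of moduli gives (b) or, when $p_3/(p_3-q_3)$ equals $p_1/q_1$ or $p_2/q_2$, a short density computation ($q_1+q_2=q_3$ together with $q_1=p-q_3$ yields $q_2=p-2q_1$) gives (c). If you want to salvage your approach, you should either invoke Morikawa's $n=3$ theorem explicitly at the point where you currently say ``I expect,'' or switch to the complementation argument, which turns the whole lemma into a two-line reduction.
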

\begin{proof} By \cite[Lemma 2]{simpson91}, the complement of $S(p_3/q_3,b_3)$
is $S(p_3/(p_3-q_3),b_3-\overline{q}_3)$. Hence
\begin{equation} \label{e 4.7}
S(p_1/q_1,b_1) \cup S(p_2/q_2,b_2)\cup
S(p_3/(p_3-q_3),b_3-\overline{q}_3)=\mathbb{Z}.
\end{equation} If $p_1/q_1$, $p_2/q_2$ and $p_3/(p_3-q_3)$ are
distinct we have the (proven) $n=3$ case of Fraenkel's conjecture
which gives part (a) of the Lemma. Otherwise two of the moduli
appearing in (\ref{e 4.7}) are equal. If $p_1/q_1 = p_2/q_2$ we
get case (b), and if $p_3/(p_3-q_3)$ equals one of the other
moduli we get case (c).
\end{proof}
We note that in part (c) we have $q<p/2$ so that
$p_3/q_3=p/(p-q)>1/2$.

The sets $B_i$ in~\eqref{eq:BiTG} are all a particular case of
TG-sequences hereby explained. Let $a$ and $d$ be residues modulo
$p$ with $(p,d)=1$, $q$ a positive integer less than $p$ and
consider the set $\{a+id \mod p:i=0,\dots,q-1\}$. Sort this set
into a sequence $a_1\le a_2 \le \dots \le a_q$. Call this sequence
a $TG-sequence$ with $q$ points and modulus $p$. Call the pairs
$(a_i,a_{i+1}), \; i=1,\dots,q-1$ and $(a_{q},a_1)$ the
\emph{gaps} of the sequence. Say the \emph{size} of such a gap is
$a_{i+1}-a_i$ when $i<n-1$ and $p+a_1-a_q$ in the other case.
\begin{The Three Gap Theorem}\label{3 gap} (\cite{3gapth})
The number of distinct gaps sizes in a $TG$-sequence is at most 3,
and if it equals 3 then the largest gap size equals the sum of the
other two. If there is only one gap size then $q=1$.
\end{The Three Gap Theorem}
\begin{proof} The first part of this theorem is well known in a different setting \cite{3gapth}.
Suppose we have only one gap size and it equals $c$. Then $p=cq$
so $c$ divides $p$.  Without loss of generality we assume that
$a$, in the definition of a $TG$-sequence, equals 0. Thus
\begin{equation*}
\{b+ic:i=0,\dots,q-1\} \equiv \{id:i=0,\dots,q-1\} \mod p
\end{equation*}
for some integer $b$.  Therefore, if $q>1$, there exist integers
$i_1$ and $i_2$ such that $0 \equiv b+i_1c \mod p$ and $d \equiv
b+i_2c \mod p$.  Therefore $d \equiv (i_2-i_1)c \mod p$. Since
$(p,d)=1$ we have $(p,c)=1$.  But $c$ divides $p$ so $c=1$ and
$p=q$. This contradicts the definition of a $TG$-sequence and we
conclude $q=1$.
\end{proof}

The reader will appreciate that $TG$ stands for Three Gap. We will
refer to gaps with the smallest size as \emph{small} gaps and the
others as \emph{larger} gaps.

\begin{corollary} \label{c1} Let $B$ be a $TG$-sequence with one larger gap
so that the points in $B$ form an arithmetic progression. Then
using the notation of the definition the common modulus of this
arithmetic progression is either $d$ or $p-d$.
 \end{corollary}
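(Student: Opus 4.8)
The plan is to read off the two candidate moduli directly from the definition of a $TG$-sequence, and to use the single larger gap only to force the modulus coming from the cyclic arrangement to agree with one of them.

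By definition $B=\{a+id\bmod p:i=0,\dots,q-1\}$, so listing the points in the order $i=0,1,\dots,q-1$ exhibits $B$ as an arithmetic progression with common modulus $d$, while listing them in the reverse order exhibits it with common modulus $-d$, that is, $p-d$. Thus $d$ and $p-d$ are always available as moduli; the content of the corollary is that, under the hypothesis, the modulus determined by the geometry of the points must be one of these.

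To exploit the hypothesis I would first unpack ``one larger gap''. By the Three Gap Theorem at most three gap sizes occur, and three distinct sizes would produce at least two gaps of non-minimal size; hence exactly two sizes occur, a small size $s$ taken by $q-1$ gaps and a larger size $L$ taken by one gap, with $(q-1)s+L=p$. Placing the points on the circle $\mathbb{Z}/p\mathbb{Z}$, each clockwise step from one point to the next is then $+s$ apart from the single step of $+L$, so cutting the circle at the large gap lists the points as $c,c+s,\dots,c+(q-1)s$. This proves that the points form an arithmetic progression and identifies its common modulus as the small gap size $s$; it remains to prove $s\equiv\pm d\pmod p$.

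The identification $s\equiv\pm d$ is the step I expect to be the main obstacle. Since every element of $B$ has the form $a+id$, any difference of two elements is a multiple of $d$, so the two progression structures are tied together by $d\equiv ms\pmod p$ for some $m$ with $1\le|m|\le q-1$; here $\gcd(s,p)=1$ is automatic, because $\gcd(d,p)=1$ and any common divisor of $s$ and $p$ would divide $d$. Translating so that $a=0$, the requirement that the points $a+id$ fill out exactly the block $c,c+s,\dots,c+(q-1)s$ becomes the assertion that the progression $\{im\bmod p:0\le i<q\}$ is an interval of $q$ consecutive residues. The single-gap hypothesis is precisely what should force $m=\pm1$, whence $s\equiv\pm d$. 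The delicate point is that a block of consecutive residues admits spurious common moduli, so the argument must use that the $q-1$ equal small gaps keep the points spread out; the borderline case where the points themselves form a block corresponds to $s=1$, and there the candidate $d$ supplied above is already a valid modulus. Deducing $m=\pm1$ from the coincidence of an $m$-progression with an interval is where the real work lies.
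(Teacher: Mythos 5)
Your reduction follows essentially the same path as the paper's own proof: normalize $a=0$, view the set simultaneously as $\{id \bmod p: 0\le i<q\}$ and as the arithmetic progression $\{c+js \bmod p: 0\le j<q\}$, multiply by a suitable unit so that one of the two progressions becomes the interval $\{0,1,\dots,q-1\}$, and reduce the whole corollary to the claim that the multiplier $m$ relating the two common differences satisfies $m\equiv\pm1\pmod p$. But you stop exactly there: you write that deducing $m=\pm1$ ``is where the real work lies'' and ``is the step I expect to be the main obstacle,'' and you never supply an argument for it. Since everything preceding that claim is bookkeeping, and the claim \emph{is} the content of the corollary, what you have is a correct plan with its central step missing, not a proof.

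The gap can be closed by the same intersection count the paper uses. If $I=\{im \bmod p: 0\le i<q\}$ is an interval of $q$ consecutive residues, then $I+m=\{im \bmod p: 1\le i\le q\}$ is also an interval of length $q$, and $|I\cap(I+m)|=q-1$ exactly (it cannot be $q$, since $I=I+m$ with $\gcd(m,p)=1$ would force $p\mid q$). Two intervals of length $q$ in $\mathbb{Z}/p\mathbb{Z}$ meeting in exactly $q-1$ points are translates of one another by $\pm1$, provided $q<p-1$ (when $q=p-1$ the set omits a single residue and every unit is a valid common modulus, so the conclusion is not at issue); hence $m\equiv\pm1\pmod p$ and $s\equiv\pm d\pmod p$. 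Without this or an equivalent argument, the ``spurious common moduli'' worry you raise is real and unresolved. A minor further remark: your first two paragraphs re-derive the fact that a single larger gap forces the points into an arithmetic progression whose common difference is the small gap size; the corollary's hypothesis already grants that the points form an arithmetic progression with some common modulus, so this portion is harmless but not where the difficulty lies.
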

\begin{proof} Using the notation of the definition of a $TG$-sequence, we may assume,
without loss of generality, that $a=0$ so that
$B\equiv\{id:i=0,\dots,q-1\}$ modulo $p$ where $(p,d)=1$. Say this
is the same set as $\{ic+b,i=0,\dots,q-1\}$.  Multiplying each
term by $d^{-1}$ gives
\begin{eqnarray*}
B'&\equiv&\{0,1,\dots,q-1\}\mod p\\
&\equiv&\{d^{-1}b,d^{-1}b+d^{-1}c,\dots,d^{-1}b+(q-1)d^{-1}c\}
\mod p.
\end{eqnarray*} From the right hand side we see that $|B' \cap (B'+
d^{-1}c)|=q-1$, so $B'+ d^{-1}c$ is congruent to $\{1,\dots,q\}$
or $\{p-1,0,\dots,q-2\}$ modulo $p$ which implies that
$d^{-1}c\equiv\pm 1 \mod p$ and so $c\equiv\pm d \mod p$, and the
result follows.
\end{proof}

\begin{corollary} \label{c6} Let $B$ be a $TG$-sequence with two larger
gaps so that the points in $B$ form two arithmetic progressions
with common modulus $c$. Then, using the notation of the
definition, $c$ is congruent modulo $p$ to either $2d$ or $-2d$.
\end{corollary}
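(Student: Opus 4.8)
The plan is to mimic the proof of Corollary~\ref{c1}, which handled the single-larger-gap case, and adapt it to the two-larger-gap situation. As in that proof, I would begin by normalizing: using the notation of the $TG$-sequence definition, assume without loss of generality that $a=0$, so that $B\equiv\{id:i=0,\dots,q-1\}\pmod p$ with $(p,d)=1$. The hypothesis says that, when sorted, $B$ breaks into exactly two arithmetic progressions each with common modulus $c$ (the small gap size), these two runs being separated by the two larger gaps. My goal is to pin down $c$ in terms of $d$.

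The key device is to multiply through by $d^{-1}$, passing to $B'\equiv\{0,1,\dots,q-1\}\pmod p$, exactly as in Corollary~\ref{c1}. Under this affine map the two runs of $B$ with common difference $c$ become two runs in $B'$ with common difference $d^{-1}c$. Now I would exploit the translation-and-overlap trick: in $B'=\{0,1,\dots,q-1\}$ translating by $d^{-1}c$ and counting the overlap $|B'\cap(B'+d^{-1}c)|$ detects how the two progressions interlock. The essential point is that $B'$ is a single block of consecutive integers, so its image under translation overlaps it maximally only for small shifts; the presence of \emph{two} runs with difference $d^{-1}c$ (rather than one, as in the previous corollary) is what should force $d^{-1}c\equiv\pm 2\pmod p$ instead of $\pm 1$. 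Concretely, I expect the two-progression structure to mean that $B'$, viewed through the difference $d^{-1}c$, splits into two interleaved chains covering $\{0,\dots,q-1\}$, and a direct count of consecutive differences should yield $2d^{-1}c\equiv\pm 1$ — no, rather the reverse pairing will give $d^{-1}c\equiv\pm 2$, whence $c\equiv\pm 2d\pmod p$ as claimed.

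The main obstacle I anticipate is making the combinatorial counting argument fully rigorous about \emph{why} two larger gaps forces the difference to be exactly $\pm 2$ and not some other even-looking value. In Corollary~\ref{c1} the single larger gap made the overlap count equal $q-1$, forcing $d^{-1}c\equiv\pm1$. Here I would need to argue that the two larger gaps partition the sorted sequence into two blocks, and that the small gap size $c$ relates to $d$ via the structure of the Three Gap Theorem: since the largest gap equals the sum of the other two, and the progressions have step $c$, I can read off that stepping by $c$ twice (skipping one large gap) returns the arithmetic-progression structure, which after the $d^{-1}$ normalization reads as $2d^{-1}c\equiv\pm1$ against the base step $1$ of $B'$. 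Carefully tracking signs and the orientation of the two progressions relative to each other will be the delicate part; I would verify it against a small explicit example to fix the sign conventions before writing the general argument.
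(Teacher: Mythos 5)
Your overall strategy is the same as the paper's up to the point where the real work happens: normalize $a=0$ so $B=\{id:i=0,\dots,q-1\}$, and aim to reduce to the situation of Corollary \ref{c1} by a multiplicative change of variable. But the step you yourself flag as "the main obstacle" is in fact the entire content of the corollary, and your proposal does not contain an argument for it. The paper closes this gap with an alternation argument that never mentions overlap counts: writing $K_1,K_2$ for the two arithmetic progressions (the two maximal runs), and assuming $(q-1)d\notin K_1$, the translate $K_1+d$ is a subset of $B$ and is itself an arithmetic progression with common modulus $c$; it therefore cannot meet both runs, and it cannot be contained in $K_1$ (it has the same cardinality as $K_1$ but is not equal to it), so $K_1+d\subseteq K_2$, and symmetrically $(K_2\setminus\{(q-1)d\})+d\subseteq K_1$. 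Hence the points $0,d,2d,\dots$ alternate between the two runs, giving $K_1=\{0,2d,4d,\dots\}$ and $K_2=\{d,3d,\dots\}$, and each run is an arithmetic progression with modulus $2d$ as well as $c$, so the Corollary \ref{c1} argument yields $c\equiv\pm 2d\pmod p$.

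Without some substitute for that alternation step, your plan does not go through. After multiplying by $d^{-1}$ you only know that $\{0,1,\dots,q-1\}$ is partitioned into two arithmetic progressions with common difference $e=d^{-1}c$ modulo $p$; that condition alone does not force $e\equiv\pm 2$. For instance $\{0,\dots,j\}$ and $\{j+1,\dots,q-1\}$ are two such progressions with $e=1$, and the overlap count $|B'\cap(B'+e)|=q-2$ is a necessary condition that likewise fails to pin down $e$ in all regimes. What rules these configurations out is precisely the hypothesis that the sorted sequence $B$ has exactly two \emph{larger} gaps, and your sketch never says how that hypothesis enters; the mid-sentence sign reversal ("$2d^{-1}c\equiv\pm1$ --- no, rather $d^{-1}c\equiv\pm2$") is a symptom of this missing link rather than a resolvable bookkeeping issue. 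If you insert the $K_1+d\subseteq K_2$ alternation argument (or an equivalent), the rest of your outline becomes the paper's proof.
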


\begin{proof} We may assume, without loss of generality, that $a=0$ so that
$B=\{id:i=0,\dots,q-1\}$ where $(p,d)=1$. Let $K_1$ be one
arithmetic progression and $K_2$ the other. Suppose $(q-1)d$ does
not belong to $K_1$ and consider $K_1+d$. This is a subset of $B$.
It cannot intersect both $K_1$ and $K_2$, neither can it be
contained in $K_1$. Therefore $K_1+d \subseteq K_2$.  Similarly
$K_2 \backslash \{(q-1)d\}+d \subseteq K_1$.  So if $0 \in K_1$
then $d \in K_2$, $2d \in K_1$ and so on. Thus
$K_1=\{0,2d,\dots\}$ and $K_2=\{d,3d,\dots\}$, or vice versa. As
in the proof of the last lemma we have $c \equiv \pm2d \mod p.$
\end{proof}
No doubt this can be easily extended to more than 2 larger gaps.
It also follows from the proof that $||K_1|-|K_2||\le 1$.

\begin{corollary} \label{c2}
Let $B$ be a $TG$-sequence with $q_2>1$ points and modulus $p$,
with smallest gap size $c$ and largest $G$, where $G>q_1$ for some
integer $q_1$. Say that it has $k$ larger gaps.

(i) If the sequence has only two gap sizes then
\begin{equation}\label{cor1.5}
p-q_1-q_2 \ge (k-1)(G-1)+(q_2-k)(c-1).
\end{equation}

(ii) If the sequence has three gap sizes then
\begin{equation}\label{cor2.5}
p-q_1-q_2\ge (k-1)(G-c-1)+(q_2-k)(c-1).
\end{equation}
\end{corollary}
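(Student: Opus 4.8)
The plan is to reduce both inequalities to the single hypothesis $G>q_1$ using only the fact that the gap sizes of a $TG$-sequence sum to $p$. Since $B$ has $q_2$ points arranged cyclically on $\mathbb{Z}/p\mathbb{Z}$, it has exactly $q_2$ gaps, and telescoping around the cycle shows that these gap sizes sum to $p$. By the definition of $k$ there are $k$ larger gaps and $q_2-k$ small gaps, each of the latter of size $c$. Writing each gap's contribution as its size minus one, the total of all such contributions equals $p-q_2$; this is the identity I will substitute into the right-hand sides.

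For part (i) there are only the two sizes $c$ and $G$, so every larger gap has size $G$ and $p-q_2=k(G-1)+(q_2-k)(c-1)$. Substituting, the claimed inequality collapses to $k(G-1)-q_1\ge (k-1)(G-1)$, that is, $G-1\ge q_1$, which is exactly $G>q_1$ since $G$ and $q_1$ are integers.

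For part (ii) I first invoke the Three Gap Theorem to identify the middle gap size as $G-c$. I then split the larger gaps into $k_1$ gaps of size $G$ and $k_2$ gaps of size $G-c$, so that $k=k_1+k_2$, and I compute $p-q_2=k_1(G-1)+k_2(G-c-1)+(q_2-k)(c-1)$. After substitution the desired inequality reduces, by a short and routine calculation, to $(G-1)+(k_1-1)c\ge q_1$, which follows from $G-1\ge q_1$ together with $c\ge 1$ and $k_1\ge 1$.

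The only genuine subtlety, and the step I would treat most carefully, is the three-gap bookkeeping: the right-hand side of (ii) charges each of the $k-1$ larger gaps the smaller amount $G-c-1$ rather than the true $G-1$ incurred by a size-$G$ gap, and the resulting slack is exactly $(k_1-1)c$. This is non-negative only because the maximal size $G$ is actually attained, forcing $k_1\ge 1$; I would state this explicitly, along with the observation that gap sizes are positive integers so $c\ge 1$. (Geometrically the term $-q_1$ accounts for a block of $q_1$ consecutive integers occupying the largest gap, which has room since $G>q_1$, but the computation above needs no such interpretation.)
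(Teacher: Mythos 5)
Your proof is correct and follows essentially the same route as the paper: count the $p-q_2$ interior points of the gaps, note that the largest size $G$ is attained at least once so that one larger gap already contributes $G-1\ge q_1$ points, and bound the remaining larger gaps below by $G-1$ (two sizes) or $G-c-1$ (three sizes, using the Three Gap Theorem to identify the middle size). Your exact decomposition $k=k_1+k_2$ in part (ii) is a slightly more explicit bookkeeping of the paper's one-sided estimate, but the content is identical.
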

The slightly awkward notation here will simplify the applications.
\begin{proof}
(i) Clearly $p$ equals $q_2$ plus the number of points in the
interiors of the gaps.  The $k$ larger gaps each contains $G-1$
points and the $q_2-k$ small gaps each contain $c-1$ points. Thus,
using the assumption $G>q_1$
\begin{eqnarray*}
p&=&q_2+k(G-1)+(q_2-k)(c-1)\\
&\ge& q_1+q_2+(k-1)(G-1)+(c-1)(q_2-k)
\end{eqnarray*}
giving the required result.\\
(ii) We have at least one gap of size $G$ containing $G-1$ points,
$k-1$ other larger gaps of size at least $G-c$ each containing at
least $G-c-1$ points and $q_2-k$ small gaps each containing $c-1$
points. Thus
\begin{equation*}
p-q_2 \ge G-1+(k-1)(G-c-1)+(q_2-k)(c-1).
\end{equation*}
By assumption $G \ge q_1+1$ which establishes the inequality.
\end{proof}

\begin{example}
 Consider the $TG$ sequence
$\{7i:i=0\dots 3 \mod 13\}=\{0,1,7,8\}$.  We have one larger gap
of size 6, another of size 5 and 2 of size 1.  In the notation of
the corollary $p=13$, $q_1=5$, $q_2=4$, $G=6$, $k=2$ and $c=1$.
Then $p-q_1-q_2$ and
$(k-1)(G-c-1)+(q_2-k)(c-1)$ both equal 4.
\end{example}

 \section{Proof of Theorem A}
 By~\eqref{eq:weight} $p=q_1 + q_2 +  q_3+  q_4$. We may also assume that
 \begin{equation} \label{q_1 + q_2 +  q_3+  q_4}
  q_1 \ge q_2 +1 \ge q_3+2 \ge q_4+3 \ge 4
\end{equation}  and $(p,q_i)=1$ for $1\leq i\leq 4$.
 We cannot have equality
throughout (\ref{q_1 + q_2 +  q_3+  q_4}) for then $p=4q_4+6$
which is not relatively prime to each $q_i$. Thus
\begin{equation}
\label{q_1 q_4} q_1\ge q_4+4.
\end{equation}
With the notation of ~\eqref{eq:BiTG}, $B_1=\{0,1,\ldots q_1-1\}$
and $B_2$ is a $TG$-sequence with $q_2$ points and modulus $p$
which contains a gap of size at least $q_1+1$ (as $B_1$ is
disjoint from $B_2$). Say that its small gaps have size $c$ and
that it has $k$ larger gaps. Clearly the largest gap $G\ge q_1+1$ which  is part of the hypothesis of
Corollary \ref{c2}.
\begin{lemma}
With the above notation, $c,k\in \{1,2\}$.
\end{lemma}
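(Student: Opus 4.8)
The plan is to read everything off the inequalities of Corollary~\ref{c2}, fed by the ordering constraints. First I would extract the numerical consequences I need: the chain of inequalities on the $q_i$ forces $q_2\ge 3$ and, since $q_3\le q_2-1$ and $q_4\le q_2-2$, also $q_3+q_4\le 2q_2-3$; moreover $G\ge q_1+1\ge q_2+2$. Since $p=q_1+q_2+q_3+q_4$, the left-hand side of both (\ref{cor1.5}) and (\ref{cor2.5}) is exactly $p-q_1-q_2=q_3+q_4\le 2q_2-3$. I would also note $k\ge 1$: if every gap of $B_2$ had the smallest size then, by the Three Gap Theorem, we would get $q_2=1$, contradicting $q_2\ge 3$. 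By the same theorem $B_2$ has exactly two or three distinct gap sizes, so exactly one of the two cases of Corollary~\ref{c2} applies.

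Next I would bound $c$. Because $B_1=\{0,1,\dots,q_1-1\}$ and $B_1\cap B_2=\emptyset$, all $q_2$ points of $B_2$ lie in $[q_1,p-1]$. Hence the $q_2-1$ gaps strictly between consecutive points of $B_2$ (all gaps except the wrap-around one) together span at most $(p-1)-q_1=q_2+q_3+q_4-1$, while each has size at least $c$. Therefore $(q_2-1)c\le q_2+q_3+q_4-1\le 3q_2-4<3(q_2-1)$, so $c<3$ and thus $c\le 2$. (The same conclusion can be squeezed out of the two cases of Corollary~\ref{c2} directly, but with more case splitting, so I prefer this uniform estimate.)

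Then I would bound $k$ by contradiction, assuming $k\ge 3$. In the two-gap-size case the right-hand side of (\ref{cor1.5}) is at least $(k-1)(G-1)\ge 2(G-1)\ge 2q_1\ge 2q_2+2$, which exceeds $q_3+q_4\le 2q_2-3$, a contradiction. In the three-gap-size case I would first invoke $c\le 2$: by the Three Gap Theorem the intermediate gap has size $G-c\ge q_1-1$, so $G-c-1\ge q_1-2\ge q_2-1$, and the right-hand side of (\ref{cor2.5}) is at least $2(q_2-1)=2q_2-2>2q_2-3\ge q_3+q_4$, again a contradiction. Hence $k\le 2$; combined with $k\ge 1$ and $1\le c\le 2$ this gives $c,k\in\{1,2\}$.

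The main obstacle is the three-gap-size case in the estimate for $k$: there the ``larger'' gaps may carry the intermediate size $G-c$ rather than $G$, so the crude bound $G\ge q_1+1$ cannot be applied to them directly. One must first secure $c\le 2$ and then use the additive relation of the Three Gap Theorem (largest gap $=$ sum of the other two) to see that the intermediate gap is still of order $q_1$. Establishing $c\le 2$ \emph{before} attacking $k$ is exactly what makes the argument close.
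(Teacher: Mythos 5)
Your proof is correct, but it is organized differently from the paper's. The paper derives both bounds, on $c$ and on $k$, entirely from Corollary \ref{c2}: it splits into the two-gap and three-gap cases and then, inside the three-gap case, runs separate subcases $(c=1,k\ge 3)$, $(c=2,k\ge 3)$ and $c\ge 3$, each time reading a contradiction with \eqref{q_1 + q_2 +  q_3+  q_4} off the right-hand side of \eqref{cor1.5} or \eqref{cor2.5}. You instead get $c\le 2$ by a direct packing estimate -- the $q_2-1$ non-wrap-around gaps of $B_2$ all lie in $[q_1,p-1]$, so $(q_2-1)c\le q_2+q_3+q_4-1\le 3q_2-4<3(q_2-1)$ -- and only then turn to $k$, where having $c\le 2$ in hand makes the three-gap estimate $(k-1)(G-c-1)\ge 2(q_2-1)$ immediate. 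This buys you a uniform treatment of $c$ with no case split on the number of gap sizes, and it correctly identifies the one genuine subtlety (that in the three-gap case the $k-1$ remaining larger gaps are only guaranteed size $G-c$, so $c$ must be controlled first); the paper's route avoids the separate packing argument but pays for it with more subcases, handling $c\ge 3$ via the algebraic rewriting $(k-1)(G-c-1)+(q_2-k)(c-1)=(k-1)(G-2c)+(c-1)(q_2-1)$ together with $G\ge 2c+1$. Your explicit remarks that $k\ge 1$ (via the one-gap clause of the Three Gap Theorem and $q_2\ge 3$) and $c\ge 1$ are also a small completeness gain over the paper, which leaves the lower bounds implicit.
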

\begin{proof}
 Suppose that $B_2$ has only the two gap sizes $G$ and $c$.
Then by (i) of Corollary \ref{c2} and recalling that
$p=q_1+q_2+q_3+q_4$,

\begin{equation*}
q_3+q_4\ge (k-1)(G-1)+(q_2-k)(c-1)\ge (k-1)q_1+(q_2-k)(c-1).
\end{equation*}
If $k\ge 3$ the right hand side is at least $2q_1$, which is
impossible by (\ref{q_1 + q_2 +  q_3+  q_4}). So we assume $k=1$
or $k=2$. If $c \ge 3$ the right hand side is at least
$(k-1)q_1+2(q_2-k)$ which is $2q_2-2$ when $k=1$ and $q_1+2q_2-4$
when $k=2$. In either case we get a contradiction with (\ref{q_1 +
q_2 +  q_3+  q_4}).  Thus if $B_2$ has two gap sizes we have $1\le
c \le 2$ and $1 \le k \le 2$.

Now suppose we have three gap sizes.  These are $G$, $G-c$ and $c$
with $G>G-c>c$ which implies
\begin{equation}
\label{2c+1} G \ge 2c+1
\end{equation} and by (ii) of Corollary \ref{c2}
\begin{equation}
\label{p_3+p_4}q_3+q_4 \ge (k-1)(G-c-1)+(q_2-k)(c-1).
\end{equation}
We consider various combinations of $c$ and $k$ values.

If $c=1$ and $k \ge 3$ (\ref{p_3+p_4}) gives
$$q_3+q_4 \ge (k-1)(q_1-1)\ge 2q_1-2$$ which is impossible by (\ref{q_1 + q_2 +  q_3+
q_4}).

If $c=2$ and $k \ge 3$ (\ref{p_3+p_4}) gives
\begin{eqnarray*}q_3+q_4 &\ge& (k-1)(G-3)+q_2-k\\
&=& (k-1)(G-4)+q_2-1\\
&\ge& 2q_1+q_2-7
\end{eqnarray*} which is again incompatible with (\ref{q_1 + q_2 +  q_3+
q_4}).

If $c\ge 3$ then, using (\ref{2c+1}),
\begin{eqnarray*}q_3+q_4 &\ge& (k-1)(G-c-1)+(q_2-k)(c-1)\\
&=& (k-1)(G-2c)+(c-1)(q_2-1)\\
&\ge& 2q_2-2
\end{eqnarray*} which is again incompatible with (\ref{q_1 + q_2 +
q_3+ q_4}). We have now eliminated all cases, for both two and
three gaps, except those with $1\le c \le 2$ and $1 \le k \le 2$.
\end{proof}
Next by eliminating the cases $(c,k)=(2,2),(1,2)$ and $(1,1)$ we will prove
\begin{proposition}
With the above notation $q_1=2q_2$ and $\tilde{q}_2=2$.
\end{proposition}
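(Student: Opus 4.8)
The plan is to dispose of the three pairs $(c,k)=(1,1),(1,2),(2,2)$ one at a time and read the conclusion off the surviving pair $(c,k)=(2,1)$. The single arithmetic fact I would use throughout is a congruence linking the two systems of moduli. Since $p_i=p$ for every $i$, \eqref{eq:simpsonqi} gives $\overline{q}_i\equiv -q_i^{-1}\pmod p$, and then, by the definition of $\tilde q_i$ as the least non-negative residue of $-q_1\overline{q}_i$, one gets $\tilde q_i\equiv q_1q_i^{-1}\pmod p$; in particular $q_2\tilde q_2\equiv q_1\pmod p$. I would combine this with the geometric information of Corollaries \ref{c1} and \ref{c6}, which read off $\tilde q_2 \bmod p$ from the spacing of the points of $B_2$, and with the counting inequalities of Corollary \ref{c2}. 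Note throughout that $B_1=\{0,\dots,q_1-1\}$, that $0<q_1+q_2<p$ because $q_3+q_4>0$, and that $q_3+q_4\le 2q_2-3$ by the constraints \eqref{q_1 + q_2 +  q_3+  q_4}.

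For $(c,k)=(1,1)$ the set $B_2$ is a single progression of common difference $1$, so Corollary \ref{c1} forces $\tilde q_2\equiv\pm1\pmod p$; the value $\tilde q_2=1=\tilde q_1$ is excluded since the $\tilde q_i$ are distinct, leaving $\tilde q_2\equiv -1$, whence the congruence gives $q_1+q_2\equiv 0\pmod p$, impossible as $0<q_1+q_2<p$. For $(c,k)=(2,2)$ with two gap sizes, Corollary \ref{c2}(i) yields $q_3+q_4\ge q_1+q_2-2\ge 2q_2-1>2q_2-3\ge q_3+q_4$, a contradiction; with three gap sizes Corollary \ref{c2}(ii) forces equality throughout \eqref{q_1 + q_2 +  q_3+  q_4}, so that $(q_1,q_2,q_3,q_4)=(q_2+1,q_2,q_2-1,q_2-2)$ and $p=4q_2-2$ is even while two of these four consecutive integers are even, violating $(p,q_i)=1$.

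The pair $(c,k)=(1,2)$ is where I expect the real work. Here $B_2$ is two runs of consecutive integers separated by its two larger gaps, with $B_1$ sitting inside one of them. I would use Corollary \ref{c2}(i) together with the congruence and Corollary \ref{c6} (which here gives $2\tilde q_2\equiv\pm1$, hence $2q_1\equiv\pm q_2\pmod p$) to collapse every possibility except the borderline one in which $B_1$ exactly fills its gap, so that the complementary set $B_3\cup B_4$ is a single block of $q_1=q_3+q_4$ consecutive integers. In that situation $B_1\cup B_2$ is also a block, so $S(p/q_1,\beta_1)\cup S(p/q_2,\beta_2)$ is a single rational Beatty sequence and hence so is its complement $S(p/q_3,\beta_3)\cup S(p/q_4,\beta_4)$; Lemma \ref{L1} then applies to this union. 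Case (b) is barred by distinctness, case (a) would force the numerator $p$ to equal $7$, far too small to carry four sequences, and case (c) would require $\{q_3,q_4\}=\{q,p-2q\}$ with $q_3+q_4=p-q=q_1$, forcing $q=q_2+q_3+q_4$ and contradicting $q\in\{q_3,q_4\}$. This eliminates $(1,2)$.

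Finally, in the surviving pair $(c,k)=(2,1)$ the set $B_2$ is a single progression of common difference $2$, so Corollary \ref{c1} gives $\tilde q_2\in\{2,\,p-2\}$, and Lemma \ref{Ofir} places $B_2$ as such a progression inside $[q_1,p-1]$. The congruence $q_2\tilde q_2\equiv q_1$ turns $\tilde q_2=2$ into $q_1=2q_2$ (using $2q_2<p$), which is exactly the claim, and turns $\tilde q_2=p-2$ into $q_1+2q_2=p$, i.e. $q_2=q_3+q_4$. The step I expect to be the genuine obstacle is ruling out this twin branch $\tilde q_2=p-2$: unlike the $(1,2)$ borderline, the complement $B_3\cup B_4$ is no longer a single block, so the clean Lemma \ref{L1} argument does not apply verbatim, and one must instead exploit that $S(p/q_2,\beta_2),S(p/q_3,\beta_3),S(p/q_4,\beta_4)$ partition the complementary Beatty sequence $\mathbb{Z}\setminus S(p/q_1,\beta_1)$ and feed $q_2=q_3+q_4$ into that three–sequence subproblem to reach a contradiction. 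Once this is done only $\tilde q_2=2$ and $q_1=2q_2$ remain, as asserted.
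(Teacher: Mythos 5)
Your treatment of the cases $(c,k)=(1,1)$, $(2,2)$ and $(1,2)$ is essentially the paper's argument (the congruence $\tilde q_i\equiv q_1q_i^{-1}\pmod p$, Corollaries \ref{c1}, \ref{c6} and \ref{c2}, and in the $(1,2)$ borderline the observation that $B_3\cup B_4$ is a block of $q_1$ consecutive residues so that $S(p/q_3,b_3)\cup S(p/q_4,b_4)=S(p/q_1,b)$ and Lemma \ref{L1} applies); your parity finish of the $(2,2)$ three-gap equality case is a harmless variant of the paper's appeal to $q_1\ge q_4+4$.

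The genuine gap is exactly where you flag it: the branch $(c,k)=(2,1)$ with $\tilde q_2\equiv-2\pmod p$, i.e.\ $p=q_1+2q_2$. You assert that ``the clean Lemma \ref{L1} argument does not apply verbatim'' because $B_3\cup B_4$ is not a block, and you defer to an unspecified contradiction obtained by feeding $q_2=q_3+q_4$ into a three-sequence subproblem; but a partition of the complement of $S(p/q_1,\beta_1)$ into three Beatty sequences is not an instance of the $n=3$ Fraenkel case, so as written this branch is not closed. The missing observation is that Lemma \ref{L1} does apply, only with $B_2$ rather than $B_1$ in the role of the single sequence: since $c=2$, $k=1$ and $|[q_1,p-1]|=2q_2$, the set $B_2$ is one of the two arithmetic progressions $\{q_1,q_1+2,\dots,p-2\}$, $\{q_1+1,q_1+3,\dots,p-1\}$ and $B_3\cup B_4$ is the other; hence $B_3\cup B_4$ is a translate of $B_2$, so $S(p/q_3,b_3)\cup S(p/q_4,b_4)=S(p/q_2,b)$ for some $b$, and Lemma \ref{L1} eliminates this exactly as in your $(1,2)$ case ($p=7$ is too small, $q_3=q_4$ is excluded, and case (c) would force $q_2>p/2>q_1$). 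With that one-line repair the branch closes and only $\tilde q_2=2$, $q_1=2q_2$ survives.
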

\begin{proof}
If $c=2$ and $k=2$ and we have three gaps (the two gap case is
even simpler) we get
\begin{eqnarray*}q_3+q_4 &\ge& G+q_2-5\\
&\ge& q_1+q_2-4
\end{eqnarray*}
so this case is also eliminated leaving the cases $(c,k)=(1,1)$,
$(1,2)$ or $(2,1)$.

Suppose $c=1$ and $k=2$, then by (\ref{cor1.5}) or (\ref{cor2.5})
we have
$$p\ge 2q_1+q_2-1.$$ By Corollary \ref{c6} $2\tilde{q}_2\equiv \pm 1 \mod p$, so that
$2q_1 \equiv \pm q_2 \mod p$. If $2q_1\equiv q_2 \mod p$ then
$p=2q_1-q_2$ which contradicts the above inequality. If instead
$2q_1\equiv -q_2 \mod p$ we have $p=2q_1+q_2$. Since $B_2$
contains 2 larger gaps and $c=1$, each larger gap has size $q_1+1$
and contains $q_1$ points. One gap contains $B_1$ and the other
$B_3 \cup B_4$. Therefore $B_3 \cup B_4$ is a translate of $B_1$. It follows that
$S(p/q_3,b_3)\cup S(p/q_4,b_4)=S(p/q_1,b)$ for some $b$. Lemma
\ref{L1} then says that either $p=7$ which is impossible,
$q_3=q_4$ which is impossible or $q_1>p/2$ which is also
impossible. We conclude that we cannot have  $c=1$ and $k = 2$. We
are left with the cases $(c,k)=(1,1)$ or $(2,1)$.

If $c=1$ and $k=1$ then, by Corollary \ref{c1}, $\tilde{q}_2\equiv \pm 1
\mod p$ so $q_1=q_2$ or $q_1=p-q_2$, both of which are impossible.

We conclude that and $c=2$ and $k=1$  so that $\tilde{q}_2\equiv \pm2
\mod p$. If $\tilde{q}_2=-2$ we have $q_1+2q_2=p$. Then
$B_2=\{q_1,q_1+2,\dots,p-2\}$ and $B_3\cup B_4=
\{q_1+1,q_1+3,\dots,p-1\}$, or vice versa. Either way $B_2$ is a
translate of $B_3\cup B_4$ so that $S(p/q_3,b_3)\cup S(p/q_4,b_4)$
= $S(p/q_2,b)$ for some $b$.  This is impossible by Lemma
\ref{L1}. So $\tilde{q}_2=2$ and $q_1=2q_2$.
\end{proof}

\begin{proposition}
With the notation of Theorem A, $q_2=2q_3$.
\end{proposition}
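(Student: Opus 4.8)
The plan is to reuse the structural machinery that just established $q_1=2q_2$, now applied one level down to the set $B_2$.  Having shown $\tilde{q}_2=2$ and $q_1=2q_2$, we know $B_1=\{0,1,\dots,q_1-1\}$ and $B_2=\{q_1,q_1+2,\dots,p-2\}$ (after fixing the orientation coming from the Proposition), so $B_1\cup B_2$ consists of the initial block $\{0,\dots,q_1-1\}$ together with every second point thereafter up to $p-2$.  Consequently the complement $B_3\cup B_4=\{0,1,\dots,p-1\}\setminus(B_1\cup B_2)$ is exactly $\{q_1+1,q_1+3,\dots,p-1\}$, an arithmetic progression with common difference $2$ sitting inside $[q_1+1,p-1]$.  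My first step is therefore to write down this complement explicitly and record that $B_3\cup B_4$ is a single arithmetic progression of difference $2$ with $q_3+q_4$ points.

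Next I would translate this back into the language of Beatty sequences.  Because $B_3\cup B_4$ is an arithmetic progression, the union $S(p/q_3,b_3)\cup S(p/q_4,b_4)$ is itself a single rational Beatty sequence $S(p/(q_3+q_4),b)$ for some $b$, exactly as in the arguments already used at the $c=1,k=2$ and $\tilde{q}_2=-2$ stages of the previous Proposition.  I would then invoke Lemma~\ref{L1} on this union: cases (a) and (b) are ruled out as before (case (a) forces $p=7$, and case (b) forces $q_3=q_4$, both impossible under~(\ref{q_1 + q_2 +  q_3+  q_4})), so we must be in case (c), giving $\{p/q_3,p/q_4\}=\{p/q,p/(p-2q)\}$ for some $q$.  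This pins down the relationship between $q_3$ and $q_4$ but not yet the ratio $q_2/q_3$, so the Lemma alone is not the whole story.

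The decisive step is to look at the fine structure of $B_2$ as a $TG$-sequence of difference $\tilde{q}_2=2$ and count points against the complement, just as Corollary~\ref{c2} was used to constrain $B_2$ against $B_1$.  Since $B_3\cup B_4$ occupies precisely the ``odd'' residues in $[q_1+1,p-1]$, the set $B_2$ alone is the ``even'' progression $\{q_1,q_1+2,\dots,p-2\}$ of length $q_2$, and the single larger gap of $B_2$ (the one of size $q_1+1$ holding $B_1$) forces $q_1=2q_2$, already known.  To extract $q_2=2q_3$ I would now repeat the whole $c,k$ analysis with the roles shifted: $B_3$ plays the part that $B_2$ played, with $B_1\cup B_2$ (a set of $q_1+q_2=3q_2$ points) playing the part of $B_1$.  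Applying the Three Gap Theorem and Corollary~\ref{c2} to $B_3$, together with Corollaries~\ref{c1} and~\ref{c6} to identify $\tilde q_3$ as $\pm$ a small multiple of the difference, should force $\tilde q_3=2$ relative to the ambient difference and hence $q_2=2q_3$.

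The main obstacle I anticipate is that $B_3$ does not live in the full group $\{0,\dots,p-1\}$ with difference $1$ the way $B_2$ did; it lives inside the progression $B_3\cup B_4$ of difference $2$, so before the counting inequalities of Corollary~\ref{c2} can be reused I must rescale by multiplying through by $2^{-1}\bmod p$, turning the difference-$2$ ambient progression into a difference-$1$ one.  Verifying that this rescaling sends $B_1\cup B_2$ to a genuine initial block (so that the ``gap of size $\ge q_1+q_2+1$'' hypothesis of Corollary~\ref{c2} is met) is the delicate point, since the block $B_1\cup B_2$ is a union of an interval and an even progression rather than a clean interval, and I would need to check carefully that after rescaling it still presents a single large gap to $B_3$.
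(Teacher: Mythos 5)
Your opening step already contains a genuine error. From the previous proposition you know only that $B_2$ is an arithmetic progression with common difference $2$ and $q_2$ terms, disjoint from $B_1=\{0,\dots,q_1-1\}$; its exact position is not determined, and the explicit description $B_2=\{q_1,q_1+2,\dots,p-2\}$ cannot be correct, because that set has $(p-q_1)/2=(q_2+q_3+q_4)/2$ elements, which equals $q_2$ only if $q_2=q_3+q_4$ --- false in the configuration you are trying to reach, where $q_2=4$ and $q_3+q_4=3$. For the same reason $B_3\cup B_4$ is not the progression $\{q_1+1,q_1+3,\dots,p-1\}$, and in general it need not be a single arithmetic progression at all: the larger gap of $B_2$ contains $B_1$ together with $q_3+q_4-q_2+1$ further points of $B_3\cup B_4$ sitting next to $B_1$, and nothing forces those to align with the points filling the small gaps. (Note also that if $B_3\cup B_4$ really were a difference-$2$ progression corresponding to a single Beatty sequence, the correspondence $\tilde q\, q\equiv q_1\pmod p$ would force $q_3+q_4=q_2$, clashing with the count above.) So the platform on which you build the rest of the argument is not there.

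The second, independent gap is that the step which would actually deliver $q_2=2q_3$ --- ``repeat the whole $c,k$ analysis with roles shifted, after rescaling by $2^{-1}$'' --- is exactly the part you leave unverified, and it does not go through as described: multiplying by $2^{-1}\bmod p$ turns the difference-$2$ ambient progression into an interval but scrambles $B_1$ into a difference-$2^{-1}$ progression, so $B_1\cup B_2$ does not become an initial block and the single-large-gap hypothesis of Corollary~\ref{c2} is not met. The paper avoids all of this by working with $B_3$ directly in the original coordinates: first $q_3+q_4\ge q_2-1$ is recorded by counting the small gaps of $B_2$; a counting argument then shows $B_1\cap(B_1+\tilde q_3)\neq\emptyset$, so Lemma~\ref{Ofir} makes $B_3$ an arithmetic progression with common difference $c\in\{\tilde q_3,p-\tilde q_3\}$; since some gap of $B_3$ contains at most one point of $B_4$ and hence at most two points of $B_2$ (because $\tilde q_2=2$), one gets $c\le 4$; the cases $c=1,2,3$ and $\tilde q_3\equiv-4$ are eliminated using $q_3+q_4\ge q_2-1$, leaving $\tilde q_3\equiv 4$, hence $q_1=4q_3$ and $q_2=2q_3$. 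That chain of reductions is the content your proposal is missing.
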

\begin{proof}
$B_2$ contains $q_2-1$  gaps of size 2 which must be filled by members
of $B_3$ and $B_4$. We thus have
\begin{equation}\label{e2}
q_3+q_4\ge q_2-1.
\end{equation} Suppose $B_1\cap(B_1+\tilde{q}_3)=\emptyset$. Then $\tilde{q}_3 \ge q_1$ so  $B_1+\tilde{q}_3$ can contain
at most one member of $B_3$ and we therefore have $p\ge
2q_1-1+q_3=q_1+2q_2+q_3-1$ which is impossible. Thus $B_1$ and
$B_1+\tilde{q}_3$ have non-empty intersection. By Lemma \ref{Ofir}
$B_3$ is an arithmetic progression with common modulus $c$ equal
to $\tilde{q}_3$ or $p-\tilde{q}_3$. The points in the gaps
between the members of $B_3$ belong to $B_2$ or $B_4$. Since there
are $q_3-1$ such gaps there is at least one gap with at most one
member of $B_4$. Since $\tilde{q}_2=2 \mod p$ this allows at most
2 elements of $B_2$ so $c \le 4$.  If $c=1$ we have $\tilde{q}_1
\equiv \tilde{q}_3$ or $\tilde{q}_1 \equiv -\tilde{q}_3$. The
first implies that $q_1=q_3$ the second that $q_1+q_3=p$, both of
which are impossible. If $c=2$ we have $\tilde{q}_3\equiv \pm
\tilde{q}_2$ which leads to a similar contradiction. If $c=3$ we
can have members of $B_2$ in at most two the gaps in $B_3$. By
considering possibilities as before we find this is impossible. So
$c=4$ which means $\tilde{q}_3 \equiv \pm 4 \mod p$. If
$\tilde{q}_3 \equiv -4 \mod p$ we get $4q_3+q_1 \equiv p \mod p$
so $3q_3 \equiv q_2+q_4 \mod p$. This means $3q_3 =q_2+q_4$. Then
(\ref{e2}) implies that $3q_3 \le q_3+2q_4+1$ which is impossible.
So $\tilde{q}_3 \equiv 4 \mod p$ which implies $q_1=4q_3$ and
$q_2=2q_3$.
\end{proof}
We are now ready to complete the proof of Theorem A.

Substituting in (\ref{e2}) gives $q_4\ge q_3-1$, so, by (\ref{q_1
+ q_2 +  q_3+ q_4}), $q_4=q_3-1$.

By considering $B_1\cap(B_1+\tilde{q}_3)$ and using Lemma
\ref{Ofir} as in the $B_3$ case we find that $B_4$ is an
arithmetic progression, possibly with a single term. Now $B_2$ is
an arithmetic progression with common modulus 2 and $2q_4+2$
terms. The $2q_4+1$ gaps between the terms must be filled with
members of $B_3$ and $B_4$. If $q_4>1$ the common modulus of $B_4$
will be 4 leading to $\tilde{q}_4\equiv \pm \tilde{q}_3 \mod p$,
leading to $q_3=q_4$ or $q_3+q_4=p$, both of which are impossible.
We conclude that $q_4=1$, $q_3=q_4+1=2$, $q_2=2q_3=4$ and
$q_1=2q_2=8$ which completes the proof of Theorem A. \qed

\end{document}